\documentclass[11pt]{article}

\usepackage[margin=1in]{geometry}
\usepackage{amsmath,amssymb,amsthm,mathtools}
\usepackage{microtype}
\usepackage{enumitem}
\usepackage[colorlinks=true,linkcolor=blue,citecolor=blue,urlcolor=blue]{hyperref}

\newtheorem{theorem}{Theorem}[section]
\newtheorem{lemma}[theorem]{Lemma}

\newtheorem{remark}[theorem]{Remark}
\newtheorem{conjecture}[theorem]{Conjecture}

\newcommand{\R}{\mathbb{R}}
\newcommand{\one}{\mathbf{1}}

\newcommand{\norm}[1]{\left\lVert #1\right\rVert}
\newcommand{\pos}[1]{\left(#1\right)_{+}}

\begin{document}

\title{Characterizing the equality case in \\Brouwer's inequality for Laplacian eigenvalues}

\author{\Large{Yuhang Cui}\thanks{E-mail: yuhang\_cui@qq.com.}  
\Large{ and Xiaodan Chen}\thanks{Corresponding author. E-mail: xdchen@gxu.edu.cn.}}

\date{School of Mathematics $\&$ Center for Applied Mathematics of Guangxi, \\
Guangxi University, Nanning 530004, Guangxi, P. R. China}

\maketitle

\begin{abstract}
Brouwer conjectured that the sum of the $k$ largest Laplacian eigenvalues of an $n$-vertex graph
is less than or equal to the number of its edges plus $\binom{k+1}{2}$ for every $k\in \{1,2,\dots,n\}$,
which has been confirmed by Kothari and Tudose (2026) recently.
In this note, we characterize the equality case in this inequality.
Our main result is that for every $n$-vertex graph $G=(V,E)$ and for every $k\in \{1,2,\dots,n-1\}$, the equality
\[
  \sum_{i=1}^k\mu_i(G)=|E(G)|+\binom{k+1}{2}
\]
holds if and only if $G$ is a threshold graph with clique number $k+1$,
where $\mu_1(G)\geq \mu_2(G)\geq \cdots\geq \mu_{n}(G)$ are the Laplacian eigenvalues of $G$.
This, together with the confirmed Brouwer's conjecture, would yield a complete solution to the full Brouwer's conjecture posed by Li and Guo (2022).
Our proof relies on the projection method of Kothari and Tudose and shows directly that the equality case can occur only for threshold graphs.
\end{abstract}

\medskip
\noindent\textbf{Keywords.} Brouwer's conjecture; sum of Laplacian eigenvalues; equality case;
threshold graph; projection method

\section{Introduction}

Throughout this note, all graphs are finite, undirected, and simple.
Let $G$ be a graph with $n$ vertices and $m$ edges.
The Laplacian matrix of $G$ is given by
\[
  L(G)=D(G)-A(G),
\]
where $D(G)$ is the diagonal matrix of vertex degrees of $G$ and $A(G)$ is the adjacency matrix of $G$.
It is known that $L(G)$ is positive semi-definite and hence, its eigenvalues are real and nonnegative,
which can usually be ordered as
\[
  \mu_1(G)\geq \mu_2(G)\ge\cdots\geq \mu_n(G)=0.
\]

In this note, we are concerned with the following spectral parameter:
\[
  S_k(G):=\sum_{i=1}^k\mu_i(G),\,\,\, \textrm{$1\leq k\leq n$},
\]
which turns out to be closely related with the (conjugate) degree sequence of a graph $G$.
Indeed, the Grone--Merris conjecture, proved by Bai and now known as the Grone--Merris--Bai theorem, states that
the Laplacian eigenvalue sequence of a graph is majorized by its conjugate degree sequence \cite{Bai,GM}.
Motivated by this prominent conjecture, Brouwer \cite{BH} proposed another interesting conjecture (later known as Brouwer's conjecture):

\begin{conjecture}[Brouwer's conjecture \cite{BH}]
For every graph $G$ with $n$ vertices and $m$ edges and for every $k\in \{1,2,\dots,n\}$,
\begin{equation}\label{eq-1-1}
  S_k(G)\leq m+\binom{k+1}{2}.
\end{equation}
\end{conjecture}
\noindent 

As one of the fundamental and challenging problems in the field of spectral graph theory,
Brouwer's conjecture has attracted considerable attention in the past nearly 20 years
(see \cite{Berndsen,Chen1,Chen2,Cooper,DuZhou,Ganie1,Ganie2,HMT,Helmberg,Kumar,Lew1,Lew2,Lew3,Mayank,Rocha1,Rocha2,Torres1,Torres2,WangK2,WangS} for partial solutions),
until Kothari and Tudose \cite{KT} proved this conjecture in full recently.

A subsequent concern is to characterize the equality case in \eqref{eq-1-1},
which motivates Li and Guo \cite{LG} to propose the full version of Brouwer's conjecture; 
see also the discussion of Chen and Zi \cite{CZ}.

\begin{conjecture}[The full Brouwer's conjecture \cite{LG}]
For every graph $G$ with $n$ vertices and $m$ edges and for every $k\in \{1,2,\dots,n-1\}$,
\begin{equation*}
  S_k(G)\leq m+\binom{k+1}{2},
\end{equation*}
with equality holding if and only if $G$ is a threshold graph \footnote{A graph is said to be threshold if and only if 
it can be constructed through an iterative process which starts with an isolated vertex,
and where, at each step, either a new isolated vertex is added, or a new dominating vertex
(i.e., a vertex adjacent to all previous vertices) is added \cite{Mahadev}.} with clique number $k+1$.
\end{conjecture}

Partial progress on the full Brouwer's conjecture has been made in \cite{CZ,LG}.
In this note, we completely characterize the equality case in \eqref{eq-1-1}.
This, as well as the confirmed Brouwer's conjecture, would yield a complete solution to the full Brouwer's conjecture.

\begin{theorem}\label{thm-1-1}
Let $G$ be a graph with $n$ vertices and $m$ edges. Then for every $k\in\{1,2,\dots,n-1\}$,
\begin{equation}\label{eq-1-2}
  S_k(G)=m+\binom{k+1}{2}
\end{equation}
holds if and only if $G$ is a threshold graph with clique number $k+1$.
\end{theorem}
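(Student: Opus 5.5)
The plan is to prove the two directions separately. Sufficiency is a known computation. Necessity is obtained by rerunning the projection argument of Kothari and Tudose and extracting rigidity from the places where their inequalities must be tight.

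\emph{Sufficiency.} Let $G$ be a threshold graph with degree sequence $d_1\ge\cdots\ge d_n$ and conjugate degree sequence $d^*_i=|\{j: d_j\ge i\}|$. For threshold graphs the Grone--Merris--Bai majorization is an equality: the Laplacian spectrum of $G$ is exactly $(d^*_1,\dots,d^*_n)$. This follows by induction on the construction, since adding an isolated vertex appends a $0$, and adding a dominating vertex to an $(n-1)$-vertex graph sends $\mu_i\mapsto \mu_i+1$ for the nonzero eigenvalues and creates a new eigenvalue $n$.

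Let $f=\max\{i: d_i\ge i-1\}$ be the trace (Durfee) parameter. For threshold graphs, $d^*_i=d_i+1$ for $i\le f$, and $f$ equals the clique number $\omega$. Then
\[
  \sum_{i\le k} d^*_i - m - \binom{k+1}{2}
\]
is increasing in $k$ while $k+1\le\omega$ (the increments are $d^*_{k}-k\ge 0$) and strictly decreasing afterwards. Summing the Ferrers diagram of a threshold graph by the standard symmetric decomposition shows that its value at $k=\omega-1$ is $0$. Hence equality holds when $k=\omega-1$. This is a routine check, which I would carry out by splitting the Ferrers diagram of $d$ along its diagonal.

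\emph{Necessity.} Write $S_k(G)=\max_P \operatorname{tr}(PL)$ over rank-$k$ orthogonal projections $P$, so that
\[
  S_k(G)=\sum_{uv\in E}\norm{P(e_u-e_v)}^2
\]
for the projection $P$ onto the top $k$ eigenvectors. The Kothari--Tudose proof bounds this sum by $m+\binom{k+1}{2}$ through a chain of inequalities: per-edge bounds involving the diagonal entries $p_u=P_{uu}$ (with $\sum_u p_u=k$ and $0\le p_u\le 1$), a rearrangement or ordering step on the $p_u$, and a final convexity or counting step producing $\binom{k+1}{2}$. Under equality \eqref{eq-1-2}, every inequality in that chain must be tight for this particular $P$.

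I would list the equality conditions one by one. The typical ones are: $p_u\in\{0,1\}$ or equal along a level; $P(e_u-e_v)$ parallel to a prescribed vector for each edge; and the edges occupying exactly the top of the order induced by $p$. From these I would derive a \emph{vertex ordering} $v_1,\dots,v_n$ (by $p_u$, ties broken via degrees) such that $N(v_j)\setminus\{v_i\}\subseteq N[v_i]$ whenever $i<j$. That is, the neighbourhoods are nested, which is equivalent to $G$ being threshold.

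Equivalently, I would show that $G$ contains no induced $2K_2$, $P_4$ or $C_4$. For each such configuration, I would exhibit a local perturbation $P'$ of $P$, for example rotating within the span of the four indicator vectors, that strictly increases the sum or violates a forced equality. Once $G$ is known to be threshold, the sufficiency computation shows that equality occurs only for $k=\omega-1$, because of the strict monotonicity on either side. This gives clique number $k+1$.

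The main obstacle is the necessity step: converting tightness of the Kothari--Tudose chain into the nested-neighbourhood property. Their argument involves slack terms that vanish on sets of measure zero, or only when $P$ is nearly diagonal, and eigenvalue multiplicities give freedom in choosing $P$. I would first try to choose, among all optimal $P$, one maximizing $\sum_u p_u^2$, which pushes $P$ toward a coordinate projection. I would then show that tightness forces $p_u\in\{0,1\}$ except on a controlled set. If that fails, the fallback is an induction on $n$: delete a vertex of minimum degree, or take complements, using $S_k(\overline G)=kn-S_{n-1-k}(G)$ on the $n-1$ nonzero eigenvalues. Equality then propagates to a smaller graph, and threshold graphs are closed under adding an isolated or dominating vertex.
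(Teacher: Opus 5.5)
\paragraph{Verdict.} The sufficiency half of your proposal is fine, but the necessity half, which is the real content of the theorem, has a genuine gap: it contains no working mechanism.

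\paragraph{What is fine.} Sufficiency via the Li--Guo computation is acceptable in outline. There is a small indexing slip: for threshold graphs, $d_i^*=d_i+1$ holds for $i\le\max\{i:d_i\ge i\}=\omega-1$, not up to $\omega$; for example, $d_n^*=0\ne d_n+1$ for $K_n$. Deducing $\omega=k+1$ from the unimodality of the defect is also fine.

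\paragraph{Where necessity fails.} You guess at the structure of the Kothari--Tudose chain, but the guessed equality conditions do not match it, and the central target is false. Tightness does not push $p_u=P_{uu}$ toward $\{0,1\}$.
\begin{itemize}
\item $P_{uu}<1$ always holds. If $P\mathbf{e}_u=\mathbf{e}_u$, then $1=\langle \mathbf{e}_u,\one_n\rangle=\langle \mathbf{e}_u,P\one_n\rangle=0$.
\item For $K_{1,2}$ with $k=1$, equality holds. Yet $P$ is the projection onto $(2,-1,-1)^{\mathsf T}/\sqrt6$, whose diagonal is $(2/3,1/6,1/6)$.
\item The perturbation idea cannot work as stated. $P$ already maximizes $\operatorname{tr}(PL)$ over rank-$k$ projections, so no rotation increases the sum. ``Violates a forced equality'' is never made concrete.
\item Your complement identity is wrong. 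The correct one is $S_k(\overline G)=kn-2m+S_{n-1-k}(G)$. With it, the Brouwer defect is invariant under $(G,k)\mapsto(\overline G,n-1-k)$, but complementation does not reduce $n$.
\item Deleting a minimum-degree vertex does not preserve equality unless that vertex is isolated. Showing that an equality graph has an isolated or dominating vertex is exactly the crux, and the proposal never addresses it.
\end{itemize}

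\paragraph{The missing idea.} The Kothari--Tudose chain is
\[
S_k(G)-m=\sum_{\{i,j\}\in E}M_{ij}\le\sum_{i<j}\pos{M_{ij}}\le\binom{k+1}{2},\qquad M_{ij}=\norm{P(\mathbf e_i-\mathbf e_j)}^2-1 .
\]
The last step combines two facts, joined by Cauchy--Schwarz and the Lagrange identity:
\begin{itemize}
\item the identity $\frac14\sum_{i\ne j}\bigl[(M_{ij}+1)^2-(P_{ii}-P_{jj})^2\bigr]=k(k+1)$;
\item the bound $\norm{\mathbf v}^2\le\sum_{i<j}(1-|M_{ij}|)|v_i-v_j|$, where $\mathbf v=nC\mathbf p$.
\end{itemize}
Tightness forces the pointwise rigidity $1-|M_{ij}|=|P_{ii}-P_{jj}|$ for every pair. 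Hence $M_{ij}\ne0$ for all $i\ne j$, and then tightness of the first two steps gives $E(G)=\{\{i,j\}:M_{ij}>0\}$.

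\paragraph{How rigidity yields the vertex.} The pointwise condition directly produces an isolated or dominating vertex.
\begin{itemize}
\item If some $P_{nn}=0$, row $n$ of $P$ vanishes. Then $M_{nj}=P_{jj}-1<0$ for all $j$, so $n$ is isolated.
\item Otherwise, take $P_{nn}$ maximal. The condition gives $P_{nj}=P_{nn}-1$ for neighbours and $P_{nj}=P_{jj}$ for non-neighbours. Combining $P\one_n=\mathbf 0$ with $(P^2)_{nn}=P_{nn}$ yields $\sum_{j\in\overline N}P_{jj}^2=-(1-P_{nn})\sum_{j\in\overline N}P_{jj}$. This forces $\overline N=\varnothing$, so $n$ is dominating.
\end{itemize}

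\paragraph{How the induction runs.} Pass to the $(n-1)\times(n-1)$ projection: either $B-\frac1{n(n-1)}J_{n-1}$ or the deleted principal submatrix. It has the same $M_{ij}$ and still satisfies the pointwise condition. The induction therefore runs on projections, not graphs, and never needs equality to survive vertex deletion. The argument also works for any choice of top-$k$ eigenprojection, so your ``maximize $\sum p_u^2$'' selection is unnecessary.
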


\begin{remark}\label{rm-1-1}
The sufficiency of Theorem \ref{thm-1-1} has been proven in \cite{LG}.
For the necessity, it is known that if $G$ is a threshold graph and (\ref{eq-1-2}) holds, then $G$ has clique number $k+1$;
see, e.g., the proof of Theorem 3.2 in \cite{CZ}.
So, to complete the proof of Theorem \ref{thm-1-1}, we just need to show that for every graph $G$ with $n$ vertices and $m$ edges,
if (\ref{eq-1-2}) holds, then $G$ is a threshold graph.
This will be done in the next section.
\end{remark}

\section{Proof of Theorem~\ref{thm-1-1}}

We consider the $n$-dimensional Euclidean space $\R^n$ with standard inner product $\langle \cdot, \cdot \rangle$.
For a column vector $\mathbf{x}:=(x_1,x_2,\cdots,x_n)^\mathsf T\in \R^n$, we write $\norm{\mathbf{x}}$ for its Euclidean norm, 
i.e., $\norm{\mathbf{x}}=\sqrt{\langle \mathbf{x}, \mathbf{x} \rangle}$, where the superscript $\mathsf T$ denotes transposition.
The vector $\mathbf{e}_i$ is the $i$th standard basis (column) vector of $\R^n$, and $\one_n$ is the all-ones (column) vector in $\R^n$.
Let $I_n$ be the $n\times n$ identity matrix, and let
\[
  J_n:=\one_n\one_n^{\mathsf T}\,\,\,
  \textrm{and}\,\,\,
  C:=I_n-\frac1nJ_n.
\]
Note that $J_n$ is the $n\times n$ all-ones matrix and $C$ is the orthogonal projection of $\R^n$ onto $\one_n^\perp$,
the orthogonal complement of the subspace of $\R^n$ spanned by~$\one_n$.

Let $P$ be an $n\times n$ orthogonal projection satisfying $P\one_n=\mathbf{0}$.
Then $P^2=P$ and $P^{\mathsf T}=P$.
We define the symmetric matrix $M$ associated with $P$ by
\[
  M_{ii}=0,\,\,\,
  \textrm{and}\,\,\,
  M_{ij}=P_{ii}+P_{jj}-2P_{ij}-1\,\,\, \textrm{for $i\ne j$}.
\]
Since $M_{ij}+1=P_{ii}+P_{jj}-2P_{ij}=\norm{P(\mathbf{e}_i-\mathbf{e}_j)}^2$, we have 
$$-1\leq M_{ij}\leq 1.$$
Let $\mathbf{p}:=(P_{11},P_{22},\dots,P_{nn})^{\mathsf T}$ and $\mathbf{v}=(v_1,v_2,\cdots,v_n)^{\mathsf T}:=CM\one_n$.
It is easy to check that

\begin{equation}\label{eq-2-1}
  \mathbf{v}=nC\mathbf{p}\,\,\,
  \textrm{and}\,\,\,
  v_i-v_j=n(P_{ii}-P_{jj}).
\end{equation}

The following two lemmas, due to Kothari and Tudose~\cite{KT}, are ingredients in the proof of Brouwer's conjecture.

\begin{lemma}[Kothari and Tudose~\cite{KT}, Lemma~5.2]\label{lm-2-1}
For every orthogonal projection $P$ of rank $k$ satisfying $P\one_n=\mathbf{0}$, the following holds
\begin{equation*}
  \frac14\sum_{i\ne j}
  \left[(M_{ij}+1)^2-(P_{ii}-P_{jj})^2\right]=k(k+1).
\end{equation*}
\end{lemma}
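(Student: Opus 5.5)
The plan is a direct algebraic computation: expand each summand using the definition of $M$, observe that the $i=j$ terms vanish so the sum can be taken over all ordered pairs, and then evaluate each resulting piece with the three properties $P^2=P$, $P^{\mathsf T}=P$ and $P\one_n=\mathbf{0}$, together with $\operatorname{tr}P=\operatorname{rank}P=k$. No deeper tool is needed.

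\textbf{Expanding the summand.} For $i\neq j$ we have $M_{ij}+1=P_{ii}+P_{jj}-2P_{ij}$. Squaring gives
\[
(M_{ij}+1)^2=(P_{ii}+P_{jj})^2-4P_{ij}(P_{ii}+P_{jj})+4P_{ij}^2 .
\]
Subtracting $(P_{ii}-P_{jj})^2$ uses $(a+b)^2-(a-b)^2=4ab$, so
\[
\tfrac14\left[(M_{ij}+1)^2-(P_{ii}-P_{jj})^2\right]=P_{ii}P_{jj}-P_{ij}(P_{ii}+P_{jj})+P_{ij}^2 .
\]
The right-hand side evaluated at $i=j$ equals $P_{ii}^2-2P_{ii}^2+P_{ii}^2=0$. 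Hence we may sum it over all ordered pairs $(i,j)$ without changing the value.

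\textbf{Summing the three pieces.} We treat the terms one at a time.
\begin{enumerate}[label=(\roman*)]
\item $\sum_{i,j}P_{ii}P_{jj}=(\operatorname{tr}P)^2=k^2$, because an orthogonal projection has trace equal to its rank.
\item $\sum_{i,j}P_{ij}P_{ii}=\sum_i P_{ii}(P\one_n)_i=0$, because $P\one_n=\mathbf{0}$. By the symmetry of $P$, likewise $\sum_{i,j}P_{ij}P_{jj}=\sum_j P_{jj}(\one_n^{\mathsf T}P)_j=0$.
\item $\sum_{i,j}P_{ij}^2=\operatorname{tr}(PP^{\mathsf T})=\operatorname{tr}(P^2)=\operatorname{tr}P=k$.
\end{enumerate}
Adding these gives $k^2-0+k=k(k+1)$, which is the claimed identity.

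\textbf{Main obstacle.} There is no real obstacle here. The only points requiring care are noticing that the diagonal terms vanish, which lets the sum run over all $(i,j)$, and that the cross terms are killed by $P\one_n=\mathbf{0}$. Both facts are immediate once the summand is written in the expanded form above.
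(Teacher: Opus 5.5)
Your proof is correct and complete. The expansion of the summand to $P_{ii}P_{jj}-P_{ij}(P_{ii}+P_{jj})+P_{ij}^2$ is right, and its vanishing at $i=j$ is right. The three evaluations $(\operatorname{tr}P)^2=k^2$, $0$ (from $P\one_n=\mathbf{0}$ and symmetry) and $\operatorname{tr}(P^2)=\operatorname{tr}P=k$ all check out.

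The paper gives no proof of this identity; it simply imports it from Kothari and Tudose. Your direct trace computation therefore supplies the self-contained verification the note omits. It uses only $P^2=P$, $P^{\mathsf T}=P$, $P\one_n=\mathbf{0}$ and $\operatorname{tr}P=\operatorname{rank}P=k$.
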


\begin{lemma}[Kothari and Tudose~\cite{KT}, Lemma~5.5]\label{lm-2-2}
For every orthogonal projection $P$ of rank $k$ satisfying $P\one_n=\mathbf{0}$, the following holds
\begin{equation*}
 \norm{\mathbf{v}}^2\leq \sum_{i<j}(1-|M_{ij}|)|v_i-v_j|.
\end{equation*}
\end{lemma}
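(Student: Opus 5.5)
The plan is to prove the inequality by reducing it to a family of cut inequalities via a layer-cake (coarea) decomposition of $\mathbf{v}$. First I rewrite the left-hand side. Since $C\one_n=\mathbf{0}$, the vector $\mathbf{v}$ is orthogonal to $\one_n$. Putting $\mathbf{a}:=\mathbf{v}=CM\one_n$ and using $\mathbf{v}=nC\mathbf{p}$ from \eqref{eq-2-1}, we get $\norm{\mathbf{v}}^2=\sum_i v_i a_i$ with $a_i=nP_{ii}-k$, because $\operatorname{tr}P=k$.

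Next I apply the layer-cake formula. Order the coordinates so that $v_1\ge\cdots\ge v_n$ and set $S_t:=\{i: v_i\ge t\}$. Because $\sum_i a_i=0$, the additive constant in $v_i=\int(\one[v_i\ge t]-\one[0\ge t])\,dt$ drops out, and
\[
  \norm{\mathbf{v}}^2=\int_{v_n}^{v_1}\sum_{i\in S_t}a_i\,dt,\qquad
  \sum_{i<j}(1-|M_{ij}|)|v_i-v_j|=\int_{v_n}^{v_1}\sum_{i\in S_t,\,j\notin S_t}(1-|M_{ij}|)\,dt .
\]
It therefore suffices to prove the cut inequality, for every $S\subseteq\{1,\dots,n\}$,
\[
  n\sum_{i\in S}P_{ii}-|S|k\;\le\;\sum_{i\in S,\,j\notin S}(1-|M_{ij}|).
\]

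To attack the cut inequality, write $d_{ij}:=M_{ij}+1=\norm{P(\mathbf{e}_i-\mathbf{e}_j)}^2\in[0,2]$, so that $1-|M_{ij}|=\min(d_{ij},2-d_{ij})$. Put $s=|S|$, $\tau=\sum_{i\in S}P_{ii}$ and $q=\one_S^{\mathsf T}P\one_S=\norm{P\one_S}^2\ge0$. Using $P\one_n=\mathbf{0}$ gives $\one_S^{\mathsf T}P\one_{S^c}=-q$, and hence
\[
  \sum_{i\in S,j\notin S}d_{ij}=(n-2s)\tau+sk+2q .
\]
From this I would bound each of the two "global" choices. With $d$ in every term, the difference between the right side and the left side is $-2s\tau+2sk+2q$. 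With $2-d$ in every term, it is $2s(n-s)-(n-2s)\tau-sk-2q-n\tau+sk$. I would then use the projection constraints $\tau\le\min(s,k)$, $q\le s$, and $q\ge \tau^2$-type Cauchy--Schwarz bounds, which come from $\norm{P\one_S}^2\ge (\sum_{i\in S}\langle P\mathbf e_i,\mathbf e_i\rangle)^2/s$ and similar estimates, to control these expressions.

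The main obstacle is exactly this last step. The sum of the termwise minima $\min(d_{ij},2-d_{ij})$ is at most the minimum of the two global sums, not at least it, so the two global bounds do not combine directly. My first attempt would be to split the cut edges into $E_-=\{d_{ij}\le1\}$ and $E_+=\{d_{ij}>1\}$. I would then run the same projection computation restricted to each part, replacing $\one_S,\one_{S^c}$ by suitable weighted vectors supported on the endpoints. The aim is to show that the deficit on $E_+$ is compensated by the PSD slack $q$ together with $P\preceq I$. If this fails, I would abandon the thresholding and instead apply the PSD bounds $0\le\mathbf{x}^{\mathsf T}P\mathbf{x}\le\norm{\mathbf{x}}^2$ directly to $\mathbf{x}=\mathbf{v}$ and to signed variants of $\mathbf{v}$. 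Here I would use the identity $\mathbf{x}^{\mathsf T}P\mathbf{x}=-\tfrac12\sum_{i,j}P_{ij}(x_i-x_j)^2$, valid for $\mathbf{x}\perp\one_n$, which expresses everything through the differences $v_i-v_j$.
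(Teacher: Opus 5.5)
Your reduction is correct as far as it goes. Since $\mathbf v=n\mathbf p-k\one_n$, the layer-cake identities hold, and it is enough to prove, for each upper level set $S$ of $\mathbf p$, the cut inequality $n\tau-sk\le\sum_{i\in S,j\notin S}(1-|M_{ij}|)$. Your two global computations are also right; both differences are in fact nonnegative. But the proposal stops exactly where the content of the lemma begins, as you acknowledge.

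Write $1-|M_{ij}|=d_{ij}-2\pos{M_{ij}}$ and use your formula for $\sum_{i\in S,j\notin S}d_{ij}$. The cut inequality is then equivalent to
\[
\sum_{i\in S,\,j\notin S}\pos{M_{ij}}\;\le\; s\,(k-\tau)+\norm{P\one_S}^2 .
\]
This is a cut-localized analogue of the bound $\sum_{i<j}\pos{M_{ij}}\le\binom{k+1}{2}$ that this lemma is used to prove, and nothing in the proposal establishes it. Splitting the cut into $E_-$ and $E_+$ with ``suitable weighted vectors'', or testing $P$ against signed variants of $\mathbf v$, are directions rather than arguments.

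The per-cut inequality is also a stronger statement than the lemma: it must hold at every level, not only after integrating in $t$. So you would have to establish it in full before building on it. For $|S|=1$ it does follow from $\sum_{l\ne i}P_{il}^2=P_{ii}(1-P_{ii})$, $P_{il}^2\le P_{ii}P_{ll}$ and Cauchy--Schwarz. Running that argument vertex by vertex for $|S|\ge 2$, however, leaves the within-$S$ term $\sum_{i\ne i'\in S}P_{ii'}=q-\tau$. That term can be negative, and you have no mechanism to pay for it.

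One of the tools you intend to use is false: there is no bound $q\ge\tau^2/s$. Cauchy--Schwarz gives the opposite direction, $q=\norm{\sum_{i\in S}P\mathbf e_i}^2\le s\tau$. For a concrete counterexample, take $k=1$ and $P=\tfrac12(\mathbf e_1-\mathbf e_2)(\mathbf e_1-\mathbf e_2)^{\mathsf T}$, the equality configuration for $K_2\cup\overline{K_{n-2}}$. The upper level set $S=\{1,2\}$ has $\tau=1$ but $q=0$.

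As written, the proposal is a valid reduction plus an unproved, nontrivial key step; it is not a proof. The paper does not prove this lemma either: it imports it from Kothari--Tudose, so there is no in-paper argument to compare your route against.
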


By Lemmas~\ref{lm-2-1} and~\ref{lm-2-2}, we now establish a sharp version of \cite[Lemma~5.6]{KT},
which is a key step towards proving Theorem~\ref{thm-1-1}.
It should be noted that the inequality \eqref{eq-2-3} has been proven by Kothari and Tudose~\cite{KT};
we reprove it here for conveniently discussing the case of equality.
   
For a real number $x$, let $x_+:=\max\{x,0\}$. It is easy to check that
for $-1\leq x\leq 1$,  
\begin{equation}\label{eq-2-2}
  x_+=\frac14\bigl[(x+1)^2-(1-|x|)^2\bigr].
\end{equation}

\begin{lemma}\label{lm-2-3}
For every orthogonal projection $P$ of rank $k$ satisfying $P\one_n=\mathbf{0}$, the following holds
\begin{equation}\label{eq-2-3}
  \sum_{i\ne j}\pos{M_{ij}}\leq k(k+1).
\end{equation}
Moreover, if the equality holds in \eqref{eq-2-3}, then for every pair $(i,j)$ with $i\neq j$,
\begin{equation}\label{eq-2-4}
  1-|M_{ij}|=|P_{ii}-P_{jj}|.
\end{equation}
\end{lemma}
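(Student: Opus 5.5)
Using \eqref{eq-2-2} with $x=M_{ij}\in[-1,1]$ and Lemma~\ref{lm-2-1}, I would write
\[
\sum_{i\ne j}\pos{M_{ij}}=\frac14\sum_{i\ne j}\bigl[(M_{ij}+1)^2-(1-|M_{ij}|)^2\bigr]
= k(k+1)-\frac14\sum_{i\ne j}\bigl[(1-|M_{ij}|)^2-(P_{ii}-P_{jj})^2\bigr].
\]
So the inequality \eqref{eq-2-3} reduces to showing
\[
\Delta:=\sum_{i\ne j}\bigl[(1-|M_{ij}|)^2-(P_{ii}-P_{jj})^2\bigr]\geq 0,
\]
and equality in \eqref{eq-2-3} means exactly $\Delta=0$.

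To bound $\Delta$ from below, I would set $a_{ij}:=1-|M_{ij}|\ge 0$ and $b_{ij}:=|P_{ii}-P_{jj}|=|v_i-v_j|/n$ by \eqref{eq-2-1}. I would then rewrite $\norm{\mathbf v}^2$ in terms of the $b_{ij}$. Since $\mathbf v=nC\mathbf p$ has zero sum, the standard identity gives
\[
\sum_{i<j}(v_i-v_j)^2=n\norm{\mathbf v}^2,\qquad\text{hence}\qquad \norm{\mathbf v}^2=n\sum_{i<j}b_{ij}^2.
\]
Lemma~\ref{lm-2-2} gives $\norm{\mathbf v}^2\le n\sum_{i<j}a_{ij}b_{ij}$, and therefore
\[
\sum_{i<j}b_{ij}^2\le\sum_{i<j}a_{ij}b_{ij}.
\]
The elementary inequality $a^2-b^2\ge 2b(a-b)$, equivalent to $(a-b)^2\ge0$, then yields
\[
\frac12\Delta=\sum_{i<j}(a_{ij}^2-b_{ij}^2)\ge\sum_{i<j}(a_{ij}-b_{ij})^2+2\sum_{i<j}b_{ij}(a_{ij}-b_{ij})\ge \sum_{i<j}(a_{ij}-b_{ij})^2\ge0.
\]
Here the middle step uses the exact identity $a^2-b^2=(a-b)^2+2b(a-b)$, and the second sum is nonnegative by the previous display. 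This proves \eqref{eq-2-3}.

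For the equality case, $\Delta=0$ forces every term in the chain to vanish, in particular $\sum_{i<j}(a_{ij}-b_{ij})^2=0$. Hence $a_{ij}=b_{ij}$ for all $i<j$, and by symmetry for all $i\ne j$, which is exactly \eqref{eq-2-4}.

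The main obstacle I expect is getting the normalization between $\norm{\mathbf v}^2$ and $\sum b_{ij}^2$ right (the factor $n$ from $v_i-v_j=n(P_{ii}-P_{jj})$ versus the factor $n$ in the zero-sum identity). I would check this carefully, since any mismatch would break the comparison with Lemma~\ref{lm-2-2}. The decomposition $a^2-b^2=(a-b)^2+2b(a-b)$ is the step that gives the sharp equality conclusion rather than only the inequality.
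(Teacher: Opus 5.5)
Your proof is correct. It shares the paper's reduction: identity \eqref{eq-2-2} plus Lemma~\ref{lm-2-1} reduce everything to $\Delta\ge 0$, and Lemma~\ref{lm-2-2} with the zero-sum (Lagrange) identity gives $\sum_{i<j}b_{ij}^2\le\sum_{i<j}a_{ij}b_{ij}$. Your normalization is right. The difference is in the final step.

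The paper applies Cauchy--Schwarz to $\sum_{i<j}(1-|M_{ij}|)|v_i-v_j|$. This forces a separate treatment of $\mathbf{v}=\mathbf{0}$. In the equality case it also yields only proportionality, $1-|M_{ij}|=\alpha|v_i-v_j|$, and $\alpha=1/n$ must then be recovered by substituting back into Lemma~\ref{lm-2-2}.

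Your identity $a^2-b^2=(a-b)^2+2b(a-b)$ instead gives the quantitative bound
\[
k(k+1)-\sum_{i\ne j}\pos{M_{ij}}=\tfrac14\Delta\ \ge\ \tfrac12\sum_{i<j}(a_{ij}-b_{ij})^2 .
\]
This bound covers $\mathbf{v}=\mathbf{0}$ automatically, and \eqref{eq-2-4} follows at once from $\Delta=0$. It is also a stability estimate: near-equality in the inequality forces $a_{ij}\approx b_{ij}$ in $\ell^2$. The Cauchy--Schwarz step discards exactly this information, since it only retains $\sum_{i<j} b_{ij}^2\le\sum_{i<j} a_{ij}^2$.

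The paper's version stays closer to the form in which Kothari and Tudose proved the inequality. Yours is shorter and slightly sharper for the equality analysis.
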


\begin{proof} 
Applying (\ref{eq-2-2}) with $x=M_{ij}$ and summing over all ordered pairs $(i,j)$ with $i\ne j$, we have
\[
  \sum_{i\ne j}\pos{M_{ij}}
  =\frac14\sum_{i\ne j}\bigl[(M_{ij}+1)^2-(1-|M_{ij}|)^2\bigr],
\]
which, together with Lemma~\ref{lm-2-1}, yields that
\begin{equation}\label{eq-2-5}
  k(k+1)-\sum_{i\ne j}\pos{M_{ij}}
  =\frac14\sum_{i\ne j}
  \left[(1-|M_{ij}|)^2-(P_{ii}-P_{jj})^2\right].
\end{equation}
If $\mathbf{v}=\mathbf{0}$, then the right-hand side of \eqref{eq-2-5} is $\frac14\sum_{i\ne j}(1-|M_{ij}|)^2$, and \eqref{eq-2-1} gives $P_{ii}=P_{jj}$ for all $i,j$.
Consequently, \eqref{eq-2-3} holds, and if the equality holds then $1-|M_{ij}|=0=|P_{ii}-P_{jj}|$ for $i\ne j$.

Assume now that $\mathbf{v}\ne \mathbf{0}$.  
By Lemma~\ref{lm-2-2} and the Cauchy--Schwarz inequality, we obtain
\begin{eqnarray}
  \norm{\mathbf{v}}^2&\leq& \sum_{i<j}(1-|M_{ij}|)|v_i-v_j|\label{eq-2-6}\\
  &\leq& \bigg(\sum_{i<j}(1-|M_{ij}|)^2\bigg)^{1/2} \bigg(\sum_{i<j}(v_i-v_j)^2\bigg)^{1/2}.\label{eq-2-6'}
\end{eqnarray}
Furthermore, since $\mathbf{v}\perp\one_n$, applying the well-known Lagrange identity, we get
\[
  \sum_{i<j}(v_i-v_j)^2=n\norm{\mathbf{v}}^2,
\]
which, together with \eqref{eq-2-6'}, yields that
\[
  \sum_{i<j}(1-|M_{ij}|)^2\geq \frac1n\norm{\mathbf{v}}^2,
\]
that is,
\begin{equation}\label{eq-2-7}
  \sum_{i\ne j}(1-|M_{ij}|)^2\geq \frac2n\norm{\mathbf{v}}^2.
\end{equation}
On the other hand, \eqref{eq-2-1} and $\mathbf{v}\perp\one_n$ imply that
\begin{equation}\label{eq-2-8}
  \sum_{i\ne j}(P_{ii}-P_{jj})^2
  =\frac1{n^2}\sum_{i\ne j}(v_i-v_j)^2
  =\frac2n\norm{\mathbf{v}}^2.
\end{equation}
Consequently, \eqref{eq-2-3} follows directly from \eqref{eq-2-5}, \eqref{eq-2-7}, and \eqref{eq-2-8}, as desired.

Moreover, if the equality holds in \eqref{eq-2-3}, then all inequalities in the above argument must be equalities;
in particular, from \eqref{eq-2-6'} we can deduce that there exists a constant $\alpha>0$ such that
\begin{equation}\label{eq-2-9}
1-|M_{ij}|=\alpha |v_i-v_j|\,\,\, \textrm{for $i<j$}.
\end{equation} 
Substituting this into \eqref{eq-2-6} gives
\[
  \norm{\mathbf{v}}^2=\alpha\sum_{i<j}(v_i-v_j)^2=\alpha n\norm{\mathbf{v}}^2,
\]
which implies that $\alpha=1/n$. 
Consequently, \eqref{eq-2-4} follows from \eqref{eq-2-1} and \eqref{eq-2-9}, as required.

This completes the proof of Lemma \ref{lm-2-3}.
\end{proof}

For a given $n\times n$ orthogonal projection $P$ satisfying $P\textbf{1}_n=\textbf{0}$, 
we define the graph $\Gamma(P)$ on $[n]:=\{1,2,\dots,n\}$ for which $\{i,j\}\in E(\Gamma(P))$ if and only if $M_{ij}>0$.

\begin{lemma}\label{lm-2-4}
Let $P$ be an $n\times n$ orthogonal projection satisfying $P\one_n=\mathbf{0}$. 
If \eqref{eq-2-4} holds for every pair $(i,j)$ with $i\ne j$, then $\Gamma(P)$ is a threshold graph.
\end{lemma}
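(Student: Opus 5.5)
The plan is to turn \eqref{eq-2-4} into exact formulas for the off-diagonal entries of $P$, and then rule out the forbidden induced subgraphs of threshold graphs. Write $p_i=P_{ii}$ and $Q:=I_n-P$, which is also an orthogonal projection, with $Q_{ii}=1-p_i$ and $Q_{ij}=-P_{ij}$ for $i\ne j$. By \eqref{eq-2-4}, $|M_{ij}|=1-|p_i-p_j|$, so $M_{ij}=\pm(1-|p_i-p_j|)$, and we get two cases from the definition of $M_{ij}$.
\begin{itemize}
\item If $\{i,j\}\notin E(\Gamma(P))$, then $p_i+p_j-2P_{ij}=|p_i-p_j|$, so $P_{ij}=\min\{p_i,p_j\}$. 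Equivalently, $\langle P\mathbf e_i,P\mathbf e_j\rangle=\norm{P\mathbf e_j}^2$ whenever $p_j\le p_i$. Geometrically, $P\mathbf e_j\perp P(\mathbf e_i-\mathbf e_j)$.
\item If $\{i,j\}\in E(\Gamma(P))$, then $p_i+p_j-2P_{ij}=2-|p_i-p_j|$, so $P_{ij}=\max\{p_i,p_j\}-1$. Equivalently, $\langle Q\mathbf e_i,Q\mathbf e_j\rangle=-\min\{1-p_i,1-p_j\}$, i.e. $Q\mathbf e_i\perp Q(\mathbf e_i+\mathbf e_j)$ when $p_i\ge p_j$.
\end{itemize}
When $M_{ij}=0$ (so $|p_i-p_j|=1$) both formulas agree, and the pair is a non-edge, which is consistent.

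Next I use the standard characterization: a graph is threshold if and only if it has no induced $2K_2$, $P_4$ or $C_4$. Each of these three graphs contains four distinct vertices $a,b,c,d$ with $ab,cd$ edges and $ac,bd$ non-edges. So it suffices to show that such an alternating configuration cannot occur in $\Gamma(P)$. For the four known pairs, the entries $P_{ab},P_{cd},P_{ac},P_{bd}$ are determined by the $p$'s via the formulas above. The entries $P_{ad},P_{bc}$ are unknown but satisfy $-1\le M\le 1$, i.e. $0\le p_i+p_j-2P_{ij}\le 2$.

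The main obstacle is the contradiction itself. I would first test the vector $\mathbf x=\mathbf e_a-\mathbf e_b-\mathbf e_c+\mathbf e_d$. Its cross terms pair $ab,cd$ (sign $-$), $ac,bd$ (sign $-$) and $ad,bc$ (sign $+$). Then I would use the two constraints
\[
0\le \norm{P\mathbf x}^2,\qquad 0\le \norm{Q\mathbf x}^2=4-\norm{P\mathbf x}^2 ,
\]
together with the analogous identities for $\mathbf e_a+\mathbf e_b-\mathbf e_c-\mathbf e_d$. Substituting the known entries should give something like $\norm{P\mathbf x}^2=4+(\text{terms in }p)-2(P_{ad}+P_{bc})$. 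Combined with the bounds on $P_{ad},P_{bc}$, this should force a strict inequality among $p_a,p_b,p_c,p_d$. The inequalities to exploit are these: the non-edge $ac$ paired with the edge $ab$ forces an ordering between $p_b$ and $p_c$ relative to $p_a$, and symmetrically at $d$, and the alternating pattern makes the four orderings cyclically inconsistent. This mirrors the usual proof that threshold graphs are those whose neighbourhoods are nested.

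If the single test vector does not suffice, the fallback is to use the orthogonality relations directly. I would WLOG order the $p$'s, say $p_a$ maximal among the four. Then I would derive $P\mathbf e_c\perp P(\mathbf e_a-\mathbf e_c)$ and $Q\mathbf e_a\perp Q(\mathbf e_a+\mathbf e_b)$, and expand $\langle P\mathbf e_c,P\mathbf e_d\rangle$ and the like to reach $1\le 0$ or $|p_i-p_j|>1$.

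An alternative route, if the case analysis becomes messy, is to prove directly that the neighbourhoods are nested by $p$: if $p_k\ge p_i$ and $\{i,j\}\in E(\Gamma(P))$ with $j\ne k$, then $\{k,j\}\in E(\Gamma(P))$. This would be shown by the same Gram-entry computation on the three vectors $\mathbf e_i,\mathbf e_j,\mathbf e_k$. Nestedness of neighbourhoods under the preorder induced by $\mathbf p$ is exactly the vicinal characterization of threshold graphs, and it would complete the proof of Lemma~\ref{lm-2-4}.
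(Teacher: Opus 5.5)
Your preliminary reductions are correct. Condition \eqref{eq-2-4} does give $P_{ij}=\min\{p_i,p_j\}$ on non-edges and $P_{ij}=\max\{p_i,p_j\}-1$ on edges. Threshold graphs are exactly the graphs without induced $2K_2$, $P_4$, $C_4$.

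\textbf{The gap.} The step that carries all the weight, the contradiction for an alternating quadruple, is only announced (``should give'', ``should force''). It cannot be obtained by the means you propose. Everything you plan to use follows from $0\preceq P_S\preceq I$ for the principal $4\times 4$ block $P_S$: $\norm{P\mathbf x}^2\ge 0$ and $\norm{Q\mathbf x}^2\ge 0$ for $\mathbf x$ supported on $\{a,b,c,d\}$, the bounds $-1\le M_{ij}\le 1$, and expansions of $\langle P\mathbf e_c,P\mathbf e_d\rangle$ and the like. But an induced $P_4$ is consistent with these constraints.

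Take $(p_a,p_b,p_c,p_d)=(0.95,0.02,0.08,0.85)$ with edges $ab,ad,cd$ and non-edges $ac,bd,bc$; this is your alternating pattern. Your formulas give
\[
P_S=\begin{pmatrix}0.95&-0.05&0.08&-0.05\\-0.05&0.02&0.02&0.02\\0.08&0.02&0.08&-0.15\\-0.05&0.02&-0.15&0.85\end{pmatrix}.
\]
Then $M_{ab}=0.07$, $M_{ad}=0.9$, $M_{cd}=0.23$, $M_{ac}=-0.13$, $M_{bd}=-0.17$, $M_{bc}=-0.94$, and \eqref{eq-2-4} holds for all six pairs. The Schur complement of the $b$-entry in $100P_S$ is $\left(\begin{smallmatrix}82.5&13&0\\13&6&-17\\0&-17&83\end{smallmatrix}\right)$. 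The Schur complement of the $a$-entry in $100(I-P_S)$ is $\left(\begin{smallmatrix}93&6&-7\\6&79.2&23\\-7&23&10\end{smallmatrix}\right)$. Both are positive definite, so $0\prec P_S\prec I$.

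Hence $P_S$ is a principal block of a genuine orthogonal projection, e.g.\ $\left(\begin{smallmatrix}P_S&(P_S(I-P_S))^{1/2}\\(P_S(I-P_S))^{1/2}&I-P_S\end{smallmatrix}\right)$. So no computation confined to these four coordinates can reach a contradiction.

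The same example defeats your alternative route: $p_a\ge p_d$ and $\{d,c\}\in E(\Gamma(P))$, yet $\{a,c\}\notin E(\Gamma(P))$, and the $3\times 3$ block on $\{a,c,d\}$ is consistent. Local arguments do exclude $2K_2$: with $p_a$ maximal, Cauchy--Schwarz for $\langle P\mathbf e_a,P(\mathbf e_c+\mathbf e_d)\rangle$ is violated. They also exclude $C_4$, via the local symmetry $P_S\mapsto I-P_S$, which flips the sign of every $M_{ij}$ and preserves \eqref{eq-2-4}. It is precisely $P_4$ where they fail.

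\textbf{The missing ingredient.} You need the global hypothesis $P\one_n=\mathbf 0$, which your plan never uses, combined with \eqref{eq-2-4} for all pairs through a single vertex. The paper takes $n$ with $P_{nn}$ maximal. Then $P_{nj}=P_{nn}-1$ for neighbours and $P_{nj}=P_{jj}$ for non-neighbours. The $n$th row of $P\one_n=\mathbf 0$ together with $(P^2)_{nn}=P_{nn}$ gives $\sum_{j\in\overline N}P_{jj}^2=-(1-P_{nn})\sum_{j\in\overline N}P_{jj}$. This forces $n$ to be dominating when all $P_{jj}>0$, while a vertex with $P_{jj}=0$ is isolated. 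The deflation $\widehat P=B-\frac{1}{n(n-1)}J_{n-1}$ (or deletion of a zero row and column) preserves every $M_{ij}$ and every difference $P_{ii}-P_{jj}$, so induction finishes.

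Your nestedness-by-$\mathbf p$ claim is in fact true under the lemma's hypotheses. But it comes out of this global peeling, not out of a Gram computation on three or four vectors.
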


\begin{proof}
We proceed by induction on $n$.  
The case $n=1$ is clear.
Assume that the statement holds for every $(n-1)\times(n-1)$ orthogonal projection satisfying the hypotheses,
and now consider the $n\times n$ orthogonal projection $P$. 
For $1\leq i\leq n$, since $P_{ii}=\mathbf{e}_i^{\mathsf T}P\mathbf{e}_i=\norm{P\mathbf{e}_i}^2$, we have $0\leq P_{ii}\leq 1$.  
Furthermore, if $P_{ii}=1$, then $\norm{P\mathbf{e}_i}^2=1=\norm{\mathbf{e}_i}^2$ and hence, $P\mathbf{e}_i=\mathbf{e}_i$.
However, 
\[
  1 =\langle \mathbf{e}_i,\one_n\rangle
    =\langle P\mathbf{e}_i,\one_n\rangle
    =\langle \mathbf{e}_i,P\one_n\rangle
    =0, 
\]
yielding a contradiction. Thus, we have
\begin{equation} \label{eq-2-10}
0\leq P_{ii}<1\,\,\, \textrm{for $1\leq i\leq n$.}
\end{equation}
This also implies that
\begin{equation} \label{eq-2-11}
M_{ij}\neq 0\,\,\, \textrm{for $i\neq j$.}
\end{equation}
Otherwise, if $M_{ij}=0$, then \eqref{eq-2-4} gives $|P_{ii}-P_{jj}|=1$, contradicting \eqref{eq-2-10}.

We next consider the following two cases.

\textbf{Case 1.}  $P_{ii}>0$ for $1\leq i\leq n$.  

Relabel the vertices of $\Gamma(P)$ so that $P_{nn}$ is maximal.  
Let $N$ denote the set of neighbors of the vertex $n$ and let $\overline{N}:=V(\Gamma(P))\backslash (N\cup \{n\})$.  
By \eqref{eq-2-11} and the definition of $\Gamma(P)$, we obtain
$$
  \textrm{$M_{nj}>0$ for $j\in N$,\, and $M_{nj}<0$ for $j\in \overline{N}$}.
$$
Since $P_{nn}\ge P_{jj}$, \eqref{eq-2-4} gives $1-|M_{nj}|=P_{nn}-P_{jj}$.
Solving this equation with $M_{nj}=P_{nn}+P_{jj}-2P_{nj}-1$, we obtain
\begin{equation*}
\textrm{$P_{nj}=P_{nn}-1$ for $j\in N$,\, and $P_{nj}=P_{jj}$ for $j\in \overline{N}$}.
\end{equation*}
Then, from the last row of $P\one_n=\mathbf{0}$ we see that
\begin{equation*}
\sum_{j\in \overline{N}}P_{jj}=\sum_{j\in \overline{N}}P_{nj}=|N|(1-P_{nn})-P_{nn}.
\end{equation*}
Consequently, the entry $(n,n)$ of $P^2=P$ gives
\begin{equation}
\sum_{j\in \overline{N}}P_{jj}^2=P_{nn}(1-P_{nn})-|N|(1-P_{nn})^2 =-(1-P_{nn})\sum_{j\in \overline{N}}P_{jj}, \label{eq-2-12}
\end{equation}
which implies that $\overline{N}=\varnothing$ and hence, $n$ is a dominating vertex of $\Gamma(P)$.
Otherwise, if $\overline{N}\ne\varnothing$, then $\sum_{j\in \overline{N}}P_{jj}>0$ and $1-P_{nn}>0$, contradicting \eqref{eq-2-12}.

Now, we have $P_{nj}=P_{nn}-1$ for $1\leq j\leq n-1$.  
Considering again the last row of $P\one_n=\mathbf{0}$, we have
$P_{nn}+(n-1)(P_{nn}-1)=0$, which implies that
\[
  \textrm{$P_{nn}=\frac{n-1}{n}$,\, and $P_{nj}=-\frac1n$ for $1\leq j\leq n-1$}.
\]
Thus, $P$ can be written as the following block matrix:
\[
  P=
  \begin{pmatrix}
    B & -\dfrac1n\one_{n-1}\\[2mm]
    -\dfrac1n\one_{n-1}^{\mathsf T} & \dfrac{n-1}{n}
  \end{pmatrix}.
\]
Since $P\one_n=\mathbf{0}$, we have $B\one_{n-1}=\frac1n\one_{n-1}$.
Also, the upper-left block of $P^2=P$ gives 
$$B^2 = B-\frac1{n^2}J_{n-1}.$$
We further define
\[
  \widehat P:=B-\frac1{n(n-1)}J_{n-1}.
\]
Noting that $J_{n-1}^2=(n-1)J_{n-1}$, we can easily check that
\[
  \textrm{$\widehat P\one_{n-1}=\mathbf{0}$\, and\, $\widehat P^2=\widehat P$}.
\]
Also, since both $B$ and $J_{n-1}$ are symmetric, so is $\widehat P$. 
These show that $\widehat P$ is an $(n-1)\times (n-1)$ orthogonal projection satisfying $\widehat P\one_{n-1}=\mathbf{0}$.
Furthermore, observing that for every pair $(i,j)$ in $[n-1]\times [n-1]$,
\[
  \widehat P_{ij}=P_{ij}-\frac{1}{n(n-1)},
\]
we can conclude that
\[
  \widehat P_{ii}-\widehat P_{jj}=P_{ii}-P_{jj},
\]
and
\[
  \textrm{$\widehat M_{i,j}:=\widehat P_{ii}+\widehat P_{jj}-2\widehat P_{ij}-1=P_{ii}+P_{jj}-2P_{ij}-1=M_{ij}$\quad $(i\neq j)$}.
\]
These mean that \eqref{eq-2-4} also holds for $\widehat P$, 
and the graph $\Gamma(\widehat P)$ is exactly the subgraph of $\Gamma(P)$ induced by the vertices in $[n-1]$.  
Consequently, the induction hypothesis gives that $\Gamma(\widehat P)$ is a threshold graph and hence, so is the graph $\Gamma(P)$,
because it can be obtained from the threshold graph $\Gamma(\widehat P)$ by adding the dominating vertex $n$.

\textbf{Case 2.} $P_{ii}=0$ for some $i\in [n]$.

Relabel the vertices of $\Gamma(P)$ so that $P_{nn}=0$. 
Then, we have $\norm{P\mathbf{e}_n}^2=P_{nn}=0$, which implies that the $n$th row and column of $P$ are $\mathbf{0}$. 
Hence, for every $j\in [n-1]$, by \eqref{eq-2-10} we get
\[
  M_{nj}=P_{nn}+P_{jj}-2P_{nj}-1=P_{jj}-1<0,
\]
which means that the vertex $n$ of $\Gamma(P)$ is isolated.

Let $P'$ be the matrix obtained from $P$ by deleting its $n$th row and column.  
It is easy to check that $P'$ is an $(n-1)\times(n-1)$ orthogonal projection satisfying $P'\one_{n-1}=\mathbf{0}$.
Furthermore, we observe that $P'_{ij}=P_{ij}$ holds for every pair $(i,j)$ in $[n-1]\times [n-1]$ and hence,
$$M'_{ij}:=P'_{ii}+P'_{jj}-2P'_{ij}-1=M_{ij}\,\,\, (i\ne j).$$
Now, by the same argument as in Case 1, we can conclude that the graph $\Gamma(P)$ is a threshold graph,
because it can be constructed from the threshold graph $\Gamma(P')$ by adding the isolated vertex $n$.

This completes the proof of Lemma \ref{lm-2-4}.
\end{proof}

We are now ready to give a proof of Theorem \ref{thm-1-1}.

\begin{proof}[Proof of Theorem~\ref{thm-1-1}]
As mentioned in Remark \ref{rm-1-1}, we just need to show that 
the equality \eqref{eq-1-2} forces the graph $G$ to be threshold.

Restrict $L(G)$ to $\one_n^\perp$, and let $P$ be the orthogonal projection onto a subspace of $\R^n$ spanned by the top $k$ eigenvectors of $L(G)$,
that is, $k$ linearly independent eigenvectors of $L(G)$ corresponding to $\mu_1(G),\mu_2(G),\cdots,\mu_k(G)$.
Then $P$ has rank $k$ and satisfies $P\textbf{1}_n=\textbf{0}$. 
As shown in the proof of \cite[Theorem 3.1]{KT}, we have 
\begin{eqnarray}
  S_k(G)-m
  =\sum_{\{i,j\}\in E(G)}M_{ij}
  \leq \sum_{\{i,j\}\in E(G)}\pos{M_{ij}}
  \leq \sum_{1\le i<j\le n}\pos{M_{ij}}
  \leq \binom{k+1}{2}.\label{eq-2-13}
\end{eqnarray}

Assume now that the equality \eqref{eq-1-2} holds. 
Then all the inequalities in \eqref{eq-2-13} must be equalities;
in particular, equality in the first inequality yields that $M_{ij}>0$ for $\{i,j\}\in E(G)$,
while the second forces $(M_{ij})_{+}=0$ for $\{i,j\}\notin E(G)$ and hence, $M_{ij}<0$ for $\{i,j\}\notin E(G)$
(since \eqref{eq-2-11} gives $M_{ij}\neq 0$ for $i\neq j$).
These imply that $G$ is isomorphic to $\Gamma(P)$ and consequently, 
from Lemma~\ref{lm-2-4} it follows that $G$ is threshold, completing the proof. 
\end{proof}

\textbf{Note.}
During the preparation of our manuscript, Professor Xiao-Dong Zhang
kindly shared with us their work~\cite{CCYZ} on the full Brouwer's Laplacian conjecture. 
Comparing our work with~theirs, we find that Lemma~\ref{lm-2-3} overlaps with Lemma~15 in~\cite{CCYZ}, 
both of which rely on the projection method of Kothari and Tudose \cite{KT}.
The main difference is that we show directly that the equality case can occur only for threshold graphs by Lemma \ref{lm-2-3},
whereas the work \cite{CCYZ} for split graphs. \\

\textbf{Declaration of AI Use.}
We used GPT 5.5 Pro to simplify our proof for Lemma \ref{lm-2-4}.

\end{document}